\documentclass[12pt]{amsart}
\usepackage{latexsym}
\usepackage{amstext}
\usepackage{amssymb}
\usepackage{amsfonts}
\usepackage{amsthm}
\usepackage{amsopn}
\usepackage{amsbsy}
\usepackage{layout}
\usepackage{color}
\usepackage{amsmath}
\usepackage{graphicx}
\usepackage{bm}
\usepackage{yfonts}
\usepackage{orcidlink} 
\textwidth	410pt

\newtheorem{theorem}{Theorem}[section]

\newtheorem{proposition}[theorem]{Proposition}
\newtheorem{question}[theorem]{Question}

\theoremstyle{definition}
\newtheorem{definition}[theorem]{Definition}
\newtheorem{remark}[theorem]{Remark}

\numberwithin{equation}{section}

\begin{document}

\vspace{0.5in}

\newcommand{\abs}[1]{\lvert#1\rvert}
\def\norm#1{\left\Vert#1\right\Vert}
\def\Q {{\Bbb Q}}
\def\I {{\Bbb I}}
\def\C {{\Bbb C}}
\def\N{{\Bbb N}}
\def\R{{\mathbb R}}
\def\di{{\mathrm{di}}}
\def\Z {{\Bbb Z}}
\def\U{{\Bbb U}}
\def\F{{\mathrm{E}}}
\def\Un{{\mathcal{U}}}
\def\Is{{\mathrm{Is}}\,}
\def\Aut{{\mathrm {Aut}}\,}
\def\supp{{\mathrm {supp}}\,}
\def\Homeo{{\mathrm{Homeo}}\,}
\def\gr{{\underline{\Box}}}
\def\diam{{\mathrm{diam}}\,}
\def\d{{\mathrm{dist}}}
\def\H{{\mathcal H}}
\def\me{{\mathrm{me}}}
\def\mO{{\overline{o}}}

\def\a{\alpha}
\def\d{\delta}
\def\D{\Delta}
\def\g{\gamma}
\def\s{\sigma}
\def\Si{\Sigma}
\def\implies{\Rightarrow}
\def\o{\omega}
\def\O{\Omega}
\def\G{\Gamma}

\def\sB{{\mathcal B}}
\def\sC{{\mathcal C}}
\def\sE{{\mathcal E}}
\def\sF{{\mathcal F}}
\def\sG{{\mathcal G}}
\def\sH{{\mathcal H}}
\def\sJ{{\mathcal J}}
\def\sK{{\mathcal K}}
\def\sL{{\mathcal L}}
\def\sM{{\mathcal M}}
\def\sN{{\mathcal N}}
\def\sO{{\mathcal O}}
\def\sP{{\mathcal P}}
\def\sR{{\mathcal R}}
\def\sS{{\mathcal S}}
\def\sT{{\mathcal T}}
\def\sU{{\mathcal U}}
\def\sV{{\mathcal V}}

\def\sbs{\subset}
\def\rar{\rightarrow}
\def\e{\epsilon}

\def\ti{\times}
\def\obr{^{-1}}
\def\stm{\setminus}
\def\newline{\hfill\break}

\def\Exp{{\mathrm{Exp}}\,}
\def\Iso{{\mathrm{Iso}}\,}
\def\Sym{{\mathrm{Sym}}\,}


\title[On the product of Weak Asplund locally convex spaces]
{On the product of Weak Asplund locally convex spaces}
\author[Jerzy K\c{a}kol and Arkady Leiderman]
{
Jerzy K\c{a}kol
\orcidlink{0000-0002-8311-2117}
and Arkady Leiderman 
\orcidlink{0000-0002-2257-1635}
}
\address{Faculty of Mathematics and Informatics, A. Mickiewicz University,
61-614 Pozna\'{n}, Poland and Institute of Mathematics Czech Academy of Sciences, Prague, Czech Republic}
\email{kakol@amu.edu.pl}

\address{Department of Mathematics, Ben-Gurion University of the Negev, Beer Sheva, P.O.B. 653, Israel}
\email{arkady@math.bgu.ac.il}

\keywords{Fr\'echet spaces, Asplund spaces, Weak Asplund spaces, product spaces, Baire spaces}
\subjclass[2010]{Primary 46A04, Secondary 54B10, 54E52}
\begin{abstract}
For locally convex spaces, we systematize several known equivalent definitions of Fr\'echet (G\^ ateaux) Differentiability Spaces and Asplund (Weak Asplund) Spaces.

As an application, we extend the classical Mazur's theorem as follows:
Let $E$ be a separable Baire locally convex space and let $Y$ be the product $\prod_{\alpha\in A} E_{\alpha}$ 
of any family of separable Fr\'echet spaces; then the product $E \times Y$ is Weak Asplund.
Also, we prove that the product $Y$ of any family of Banach spaces $(E_{\alpha})$ is an Asplund locally convex space if and only if each 
$E_{\alpha}$ is Asplund.

Analogues of both results are valid under the same assumptions, if $Y$ is the $\Sigma$-product of any family $(E_{\alpha})$.
\end{abstract}

\maketitle
\section{Equivalence of definitions}\label{Equivalence}
\bigskip

All topological spaces $X$ are assumed to be Tychonoff and all topological vector spaces are Hausdorff.
All vector spaces are considered over the field of real numbers $\mathbb R$.
The abbreviation lcs means a locally convex space. 
A completely metrizable lcs is called a Fr\'echet lcs.
A topological space $X$ is said to be {\it Baire} space if the intersection of every countable sequence of dense open subsets in $X$ is dense.
By a {\it bounded set} in a topological vector space we understand any set which is absorbed by every $0$-neighbourhood.

There are many senses in which a map between topological linear spaces can be considered differentiable 
(see \cite{AS} for details and history), the choice often depends on the applications under consideration.
The authors of notes \cite{Keller}, \cite{Yamamuro} have suggested and explored several approaches to overcome special difficulties that arise in non-normed cases.

As R. R. Phelps indicated in his thoroughly written book \cite{Phelps}, 
the first infinite-dimensional result about differentiability properties of convex functions was obtained in 1933 by S. Mazur \cite{Mazur}:
A continuous convex function $f:D \to \R$ defined on an open convex subset $D$ of a separable Banach space $E$, is G\^ ateaux differentiable on a dense $G_{\delta}$ subset of $D$.

In 1968, E. Asplund \cite{Asplund} extended Mazur's theorem in two ways. Namely, he proved the same statement for a more general class of Banach spaces;
and studied a more restricted class of Banach spaces (now called Asplund spaces) in which a 
stronger conclusion of Fr\'echet differentiability holds.

In the formulations of Definitions \ref{def1} - \ref{def4} below $D$ denotes a nonempty open convex subset of a lcs $E$.
\bigskip
\begin{definition}\label{def1}
A convex function $f: D \to \R$ is said to be Fr\'echet differentiable at a point $x_0\in D$
if there exists a continuous linear functional $f'(x_0)\in E'$ such that for any bounded subset $B\subset E$ and $\epsilon > 0$, there is $\delta > 0$ such that
$$\sup_{x\in B}\left|\frac{f(x_0+tx)-f(x_0)}{t} - \left\langle f'(x_0), x \right\rangle\right|<\epsilon$$
whenever $0< |t| <\delta$.
\end{definition}

\begin{definition}\label{def2} 
A convex function $f: D \to \R$ is said to be G\^ ateaux differentiable at a point $x_0\in D$
if there exists a continuous linear functional $f'(x_0)\in E'$ such that for any $x\in E$ and $\epsilon > 0$, there is $\delta > 0$ such that
$$\left|\frac{f(x_0+tx)-f(x_0)}{t} - \left\langle f'(x_0), x \right\rangle\right|<\epsilon$$
whenever $0< |t| <\delta$.
\end{definition}

Similarly to Banach spaces, locally convex spaces can be classified according
to the differentiability properties of the specified class of real valued convex functions. In the paper we follow the abbreviations suggested in \cite{Sharp}.

\begin{definition}\label{def3}
A lcs $E$ is called Asplund (Weak Asplund) if every continuous convex function $f: D \to \R$
is Fr\'echet (G\^ ateaux, respectively) differentiable on a dense $G_{\delta}$ subset of $D$.
Asplund (Weak Asplund) spaces are abbreviated by ASP (WASP, respectively). 
\end{definition}

\begin{definition}\label{def4}
A lcs $E$ is called Fr\'echet (G\^ ateaux) Differentiability Space if every continuous convex function $f: D \to \R$
is Fr\'echet (G\^ ateaux, respectively) differentiable on a dense subset of $D$.
Fr\'echet (G\^ ateaux) differentiable spaces are abbreviated by FDS (GDS, respectively). 
\end{definition}

The classification of locally spaces according to the dense or generic differentiability
of convex functions continues work of E. Asplund \cite{Asplund}, D. G. Larman and R. R. Phelps \cite{Larman} and I. Namioka and R. R. Phelps \cite{Namioka}.
It is known that for  Banach spaces the points of  Fr\'echet differentiability of any continuous convex function form a $G_{\delta}$ set in its domain, 
so for Banach spaces ASP and FDS are equivalent. 
M. Talagrand \cite{Tal1} constructed a 1-Lipschitz function on a Banach space $C(K)$ whose set of points of G\^ ateaux differentiability is dense but of the first category.
 Moreover, M. \v Coban and P. Kenderov \cite{Coban} have given examples to show that even when the set
of G\^ ateaux differentiability points of the $\sup$-norm of a Banach space $C(K)$ is dense, it need not contain a $G_{\delta}$ set.
In particular, this phenomenon occurs for the {\it double arrows} compact space $K$.
There is an example of a Banach space that is GDS but not Weak Asplund \cite{Moors}.

A systematic research of locally convex spaces that are ASP, WASP, FDS, and GDS was originated in 1990 by B. Sharp \cite{Sharp}, and then it was developed further in joint papers \cite{Eyland_Sharp1}, \cite{Eyland_Sharp2},
 \cite{Eyland_Sharp3}.
Among other results, B. Sharp \cite{Sharp} obtained a generalization of Mazur's Theorem.
\begin{theorem}\label{theorem_sharp}\cite[Theorem 2.1]{Sharp}
 All separable Baire topological linear spaces (not necessarily lcs) are WASP.
\end{theorem}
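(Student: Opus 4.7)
I would adapt Mazur's classical Baire-category argument, extracting from $E$ only the two features actually used: separability, which supplies a countable dense set of test directions, and the Baire property, which permits countably many dense open sets to intersect in a dense $G_\delta$. Local convexity is never invoked.

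Fix a nonempty open convex $D \subset E$, a continuous convex $f \colon D \to \R$, and a countable dense set $\{x_n\}_{n \in \N} \subset E$. Because the restriction of $f$ to each line parallel to $\R x_n$ is convex, the symmetric difference quotient
$$g_n(x,t) := \frac{f(x + t x_n) + f(x - t x_n) - 2 f(x)}{t}$$
is non-negative and non-decreasing in $t > 0$ (wherever $x \pm t x_n \in D$). For $n, k \in \N$ set
$$A_{n,k} := \{\, x \in D : g_n(x, t) < 1/k \text{ for some admissible } t > 0 \,\}.$$
Continuity of $f$ and openness of $D$ make each $A_{n,k}$ open in $D$. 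To prove density of $A_{n,k}$, fix any nonempty open $V \subset D$ and any $x_0 \in V$: the restriction of $f$ to $V \cap (x_0 + \R x_n)$ is a convex function on a real open interval, hence differentiable outside a countable subset, and any differentiability point $y$ satisfies $g_n(y, t) \to 0$ as $t \to 0^+$, so $y \in V \cap A_{n,k}$. The Baire property of $D$, inherited from $E$, then guarantees that $G := \bigcap_{n,k} A_{n,k}$ is a dense $G_\delta$ subset of $D$.

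It remains to verify that $f$ is G\^ateaux differentiable at every $x_0 \in G$. By convexity,
$$\psi(x) := \lim_{t \to 0^+} \frac{f(x_0 + tx) - f(x_0)}{t}$$
exists for every $x \in E$ and defines a sublinear functional. Continuity of $f$ at $x_0$ supplies a balanced zero-neighborhood $U$ on which $\psi$ is bounded above by some $M$; sublinearity combined with $-U = U$ gives $\psi(v) \geq -\psi(-v) \geq -M$ for $v \in U$, so $\psi$ is bounded on $U$ and hence continuous. The inclusion $x_0 \in A_{n,k}$ for every $k$ forces $\psi(x_n) + \psi(-x_n) = 0$; density of $\{x_n\}$ together with continuity of $\psi$ propagates this identity to $\psi(x) + \psi(-x) = 0$ for every $x \in E$, and combining $\psi(-x) = -\psi(x)$ with subadditivity yields additivity. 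Hence $\psi \in E'$ is the G\^ateaux derivative of $f$ at $x_0$ required by Definition \ref{def2}.

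The step I expect to be most delicate is the last one: producing a \emph{continuous} linear functional in $E'$ when $E$ is not assumed to be locally convex. The small but essential lemma in the background is that a sublinear functional bounded above on a single balanced zero-neighborhood of a topological linear space is automatically continuous, which is where both the balancedness of a zero-neighborhood basis and the sublinearity of $\psi$ are genuinely used. All other ingredients---monotonicity of the symmetric second-order difference quotient in $t$, one-dimensional differentiability of a convex function off a countable set, inheritance of the Baire property by an open subspace---are classical and transfer essentially unchanged from the Banach setting.
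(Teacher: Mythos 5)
The paper does not prove this statement itself; it is quoted from Sharp's article, whose argument is exactly the Mazur-type Baire-category proof you give (open dense sets where the symmetric difference quotient is small, intersected over a countable dense set of directions). Your proof is correct --- the only point left implicit is that Definition \ref{def2} requires a two-sided limit in $t$, which follows immediately from the oddness $\psi(-x)=-\psi(x)$ that you establish.
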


Unfortunately, several authors much later rediscovered this result without any proper citation of the original publication \cite{Sharp};
the list of such papers includes \cite{Corbacho}, \cite{Lee}, \cite{Zheng}. 
Regrettably, it appears that other important works \cite{Eyland_Sharp1}, \cite{Eyland_Sharp2}, \cite{Eyland_Sharp3} have also not received due attention in the later publications.

Let us emphasize that in Definitions \ref{def3} and \ref{def4} one consider a collection of arbitrary continuous convex functions $f: D \to \R$,
where $D\subset E$ is any nonempty open convex set. A natural question arises whether we can take a subcollection of more specific functions $f$ and obtain the same classes of lcs $E$.
The following approach in this direction was developed in \cite{Eyland_Sharp2}: Consider only $f: E \to \R$ which are the {\it Minkowski gauges} of open disks,
equivalently, consider only arbitrary {\it seminorms} defined on lcs $E$.
For our purposes, the principal results obtained in \cite{Eyland_Sharp2} can be reformulated as follows.

\begin{theorem}\label{theorem_eq1} \cite{Eyland_Sharp2} Let $E$ be a lcs.\hfill
\begin{enumerate}
\item[{\rm (1)}] $E$ is GDS if and only if every seminorm on $E$ is G\^ ateaux differentiable on a dense subset of $E$.
\item[{\rm (2)}] $E$ is FDS if and only if every  seminorm on $E$ is Fr\'echet differentiable on a dense subset of $E$. 
\item[{\rm (3)}] Assume that $E$ is Baire. Then $E$ is WASP if and only if every seminorm on $E \times \R$ is G\^ ateaux differentiable on a dense $G_{\delta}$ subset of $E \times \R$.
\item[{\rm (4)}] Assume that $E$ is Baire. Then $E$ is ASP if and only if every seminorm on $E \times \R$ is Fr\'echet differentiable on a dense $G_{\delta}$ subset of $E \times \R$.
\end{enumerate}
\end{theorem}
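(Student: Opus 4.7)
The plan is to relate, in each part, the differentiability of arbitrary continuous convex functions to that of continuous seminorms, which are precisely the Minkowski gauges of balanced absolutely convex $0$-neighborhoods. The ``only if'' directions are nearly immediate: a seminorm on $E$ is a continuous convex function on the open convex set $E$ itself, so if $E$ is GDS or FDS the conclusions (1) and (2) follow at once. For (3) and (4), a seminorm $p$ on $E \times \R$ is not literally a function on $E$, but by positive homogeneity the ``slice at $s = 1$'' map $x \mapsto p(x, 1)$ is a continuous convex function on $E$, and G\^{a}teaux (Fr\'echet) differentiability of $p$ at $(x_0, 1)$ reduces to that of the slice at $x_0$; the Baire hypothesis, together with the fact that the G\^{a}teaux/Fr\'echet differentiability set of a seminorm on an lcs is a $G_\delta$, then upgrades density to dense $G_\delta$ density.

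For the substantive ``if'' directions, the strategy is to associate to a given continuous convex $f \colon D \to \R$ a continuous seminorm $p_f$ whose differentiability at chosen points forces the same for $f$. For (1) and (2), I would argue by contrapositive in local form: given a nonempty open $U \subset D$, find a G\^{a}teaux (Fr\'echet) differentiability point of $f$ in $U$. After translating to $0 \in U$, $f(0) = 0$, and restricting to a small absolutely convex open $V \subset U$ on which $f$ is bounded, I would take $p_f$ to be the Minkowski gauge of an absolutely convex $0$-neighborhood built from the symmetrized sublevel set $\{x \in V : \max(f(x), f(-x)) \le 1\}$, possibly with an auxiliary affine correction, designed so that G\^{a}teaux (Fr\'echet) differentiability of $p_f$ at $v \in V$ forces the same for $f$. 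The hypothesis would then supply the needed dense set of such $v$ inside $V$.

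For (3) and (4) the extra real coordinate permits a faithful epigraphical encoding. With $f$, $V$, and $0$ as above, I would take $p_f$ to be the Minkowski gauge in $E \times \R$ of a balanced absolutely convex $0$-neighborhood manufactured from the epigraph of $f$ over $V$ (translated so that $(0, 1)$ lies in its interior, and then balanced by taking the absolute convex hull with its reflection). A direct computation would identify the G\^{a}teaux (Fr\'echet) differentiability points of $p_f$ at heights near $1$ with those of $f$ in $V$, so that the dense $G_\delta$ differentiability set of $p_f$ in $E \times \R$ projects to one for $f$ in $V$, and then to $D$ by a translation/covering argument.

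The main obstacles I anticipate are: (i) verifying that the prescribed $p_f$ is actually a continuous seminorm on the relevant lcs, which requires the defining set to be a balanced absolutely convex $0$-neighborhood and uses local boundedness of $f$; and (ii) checking the precise correspondence between differentiability of $p_f$ and of $f$, including the identification of the derivative functionals in $E'$ (or in $(E \times \R)'$) arising from those of $p_f$. A related subtlety for (3)--(4) is the Baire-category upgrade from density to dense $G_\delta$, which rests on the $G_\delta$ structure of the differentiability set of seminorms.
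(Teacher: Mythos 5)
First, a framing remark: the paper does not prove this theorem --- it is quoted from Eyland--Sharp \cite{Eyland_Sharp2} as a reformulation of their results, so there is no in-paper argument to compare against. Judged on its own terms, your sketch has the right architecture (trivial ``only if'' for (1)--(2), a slice/homogeneity reduction for (3)--(4), gauge constructions for the converses), but two load-bearing steps are wrong as stated.

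You invoke ``the fact that the G\^ateaux/Fr\'echet differentiability set of a seminorm on an lcs is a $G_\delta$'' to pass from density to dense-$G_\delta$ in (3)--(4). For G\^ateaux differentiability this is false, and the paper itself records the counterexample: by \v{C}oban--Kenderov \cite{Coban}, the set of G\^ateaux differentiability points of the sup-norm on $C(K)$, $K$ the double arrow space, is dense but contains no $G_\delta$ set. The correct mechanisms are different. In the ``only if'' direction of (3), the WASP hypothesis applied to the slice $q(x)=p(x,1)$ already yields a dense $G_\delta$ set $\Omega\subset E$; positive homogeneity (plus the standard fact that a sublinear functional linear along a ray is additive across that ray) transports $\Omega\times(0,\infty)$ to a dense $G_\delta$ of the open dense set $\{s\neq 0\}$, hence of $E\times\R$. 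In the ``if'' direction, the Baire hypothesis enters not through any $G_\delta$ structure of differentiability sets but through the localization lemma (the paper's Fact 2, from \cite{Fabian}): one covers $D$ by small open sets on each of which the differentiability set of $f$ contains a dense $G_\delta$, obtained from the epigraph-gauge seminorm on $E\times\R$, and glues.

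For the ``if'' direction of (1)--(2), your candidate seminorm --- the gauge of the symmetrized sublevel set $\{x\in V:\max(f(x),f(-x))\le 1\}$ --- does not have the transfer property your argument requires. On a Hilbert space take $f(x)=\max(\|x\|-1,0)$: the sublevel set $\{f\le 1\}$ is the ball of radius $2$, its gauge is $\|x\|/2$, Fr\'echet differentiable at every $x\neq 0$, yet $f$ is not even G\^ateaux differentiable at any point of the unit sphere. A sublevel set records only where $f$ crosses one level, not the local behaviour of $f$; the faithful encoding is the epigraph, which is exactly what forces the extra $\R$ factor in (3)--(4). Obtaining (1)--(2) with seminorms on $E$ alone therefore requires a genuinely different argument (this is the substance of \cite{Eyland_Sharp2}), and the phrase ``designed so that differentiability of $p_f$ forces the same for $f$'' is precisely the missing idea rather than a checkable step.
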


As another natural option, one can take a subcollection of those continuous convex functions $f$ which domains coincide with the whole $E$.
Since every seminorm is a continuous convex function, Theorem \ref{theorem_eq1} immediately implies

\begin{proposition}\label{prop_eq2} Let $E$ be a lcs.\hfill
\begin{enumerate}
\item[{\rm (1)}] $E$ is GDS if and only if every continuous convex function $f: E \to \R$ is G\^ ateaux differentiable on a dense subset of $E$.
\item[{\rm (2)}] $E$ is FDS if and only if every continuous convex function $f: E \to \R$ is Fr\'echet differentiable on a dense subset of $E$.
\end{enumerate}
\end{proposition}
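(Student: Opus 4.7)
The plan is to verify both directions of each equivalence using Theorem \ref{theorem_eq1} as a black box; the proof really is a short chain of implications, as the phrasing ``immediately implies'' indicates.

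For the forward direction ($\Rightarrow$) in both (1) and (2), I would simply specialize Definition \ref{def4}. Since $E$ itself is a nonempty open convex subset of $E$, choosing $D = E$ in the definition of GDS (resp.\ FDS) at once yields the conclusion: every continuous convex function $f : E \to \R$ is Gâteaux (resp.\ Fréchet) differentiable on a dense subset of $E$.

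For the backward direction ($\Leftarrow$), I would observe that a continuous seminorm $p$ on $E$ is nothing but a (non-negative, sublinear) continuous convex function $p : E \to \R$ whose domain is the whole space. Under the hypothesis of (1) (resp.\ (2)), the function $p$ must therefore be Gâteaux (resp.\ Fréchet) differentiable on a dense subset of $E$. Since this holds for every continuous seminorm on $E$, we are in position to invoke Theorem \ref{theorem_eq1}(1) (resp.\ (2)), which delivers that $E$ is GDS (resp.\ FDS).

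There is no serious obstacle here; the argument is essentially a tautological pairing of Definition \ref{def4} (with $D=E$) and the seminorm characterization in Theorem \ref{theorem_eq1}. The only point worth a moment of care is to confirm that the seminorms appearing in Theorem \ref{theorem_eq1}(1)--(2) are indeed the continuous ones (which is the natural reading in the differentiability setting), so that the inclusion ``continuous seminorms $\subset$ continuous convex functions on $E$'' provides the required reduction. Once this is noted, the proof is complete in two lines per item.
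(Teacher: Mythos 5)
Your argument is correct and is precisely the one the paper intends: the forward direction is Definition \ref{def4} specialized to $D=E$, and the backward direction feeds the (continuous) seminorms, viewed as continuous convex functions on all of $E$, into Theorem \ref{theorem_eq1}(1)--(2). Your side remark that the seminorms in Theorem \ref{theorem_eq1} are the continuous ones (gauges of open disks) is the right reading and matches the paper's setup.
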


Analogous equivalencies for WASP and ASP provided $E$ is a Banach spaces are very well known. However, the published proofs of these equivalencies
are very indirect and use a heavy machinery of the Banach spaces tools (see \cite[Theorem 8.26]{Fabian1}, \cite[Section I.5]{DGZ}).
It is not clear whether these arguments can be easily adapted to the case of non-Banach spaces.
Nevertheless, the statement is true for any Baire lcs. It is sufficient to apply another result obtained in \cite{Eyland_Sharp2}. 

We define an {\it asymptotic seminorm} on a lcs $E$ (see \cite{Eyland_Sharp2}) to be a continuous function $f: E \to \R$ for which there is a seminorm $g$ on $E$ satisfying:
\begin{enumerate}
\item[{\rm (a)}] If $x_{\alpha}\rightarrow x$ in $E$ and $\lambda \rightarrow \infty$ then $\frac{1}{\lambda} f(\lambda x_{\alpha}) \rightarrow g(x)$;
\item[{\rm (b)}] For all $x \in E$, $f(x)=f(-x)$ and $f(x)\geq g(x)$. 
\end{enumerate}

An asymptotic seminorm is convex and every seminorm is an asymptotic seminorm.
\begin{theorem}\label{theorem_eq3} \cite{Eyland_Sharp2} Let $E$ be a Baire lcs.\hfill
\begin{enumerate}
\item[{\rm (1)}] $E$ is WASP if and only if every asymptotic seminorm on $E$ is G\^ ateaux differentiable on a dense $G_{\delta}$ subset of $E$.
\item[{\rm (2)}] $E$ is ASP if and only if every asymptotic seminorm on $E$ is Fr\'echet differentiable on a dense $G_{\delta}$ subset of $E$.
\end{enumerate}
\end{theorem}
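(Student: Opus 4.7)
The forward direction of both (1) and (2) is immediate: an asymptotic seminorm is by construction a continuous convex function on $E$, so the conclusion follows from Definition \ref{def3}. For the reverse direction, I would invoke Theorem \ref{theorem_eq1}(3)--(4) to reduce the problem to the following claim: every seminorm $p$ on $E\times\R$ is G\^ateaux (resp.\ Fr\'echet) differentiable on a dense $G_{\delta}$ subset of $E\times\R$.

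To prove this claim, the plan is to associate to each seminorm $p$ on $E\times\R$ a single asymptotic seminorm on $E$, namely
\[
f(x)=\tfrac{1}{2}\bigl(p(x,1)+p(x,-1)\bigr),\qquad g(x)=p(x,0).
\]
Property (a) of an asymptotic seminorm is verified by positive homogeneity and joint continuity of $p$: $\tfrac{1}{\lambda}f(\lambda x_{\alpha})=\tfrac{1}{2}\bigl(p(x_{\alpha},1/\lambda)+p(x_{\alpha},-1/\lambda)\bigr)\to g(x)$. Property (b) follows from $p(-v)=p(v)$ and the triangle inequality $p(x,1)+p(x,-1)\geq p(2x,0)=2g(x)$. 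By hypothesis, $f$ is G\^ateaux (resp.\ Fr\'echet) differentiable on a dense $G_{\delta}$ set $S\subset E$. For each $x_0\in S$, applying the Moreau--Rockafellar sum formula to $f=\tfrac{1}{2}p(\cdot,1)+\tfrac{1}{2}p(\cdot,-1)$ (legitimate since both summands are everywhere continuous) yields $\partial f(x_0)=\tfrac{1}{2}\partial p(\cdot,1)(x_0)+\tfrac{1}{2}\partial p(\cdot,-1)(x_0)$; singletonness of the left side forces singletonness of each summand, so $p(\cdot,\pm 1)$ is G\^ateaux differentiable at $x_0$. In the Fr\'echet case I would use instead that each of the two remainder terms is pointwise nonnegative by the subgradient inequality, so uniform decay of the remainder for $f$ forces uniform decay for each summand. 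Next, the identity
\[
p(x_0+th,\,1+tr)=(1+tr)\,p\bigl((x_0+th)/(1+tr),\,1\bigr),
\]
valid for small $t$, combined with the substitution $\tau=t/(1+tr)$, allows a direct difference-quotient computation showing that differentiability of $p(\cdot,1)$ at $x_0$ propagates to differentiability of $p$ at $(x_0,1)$ with derivative $\bigl(\phi_0,\,p(x_0,1)-\langle\phi_0,x_0\rangle\bigr)$; the analogous argument starting from $p(\cdot,-1)$ handles $(x_0,-1)$.

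Positive homogeneity of $p$ then propagates differentiability from $(x_0,\pm 1)$ to every $(tx_0,t)$ with $t\neq 0$, and the resulting set
\[
T=\bigl\{(y,t)\in E\times\R : t\neq 0,\ y/|t|\in S\bigr\}
\]
is the preimage of $S$ under the continuous map $(y,t)\mapsto y/|t|$ defined on $E\times(\R\setminus\{0\})$, and so is a $G_{\delta}$ in $E\times\R$; density of $T$ in $E\times\R$ follows from density of $S$ in $E$ together with density of $E\times(\R\setminus\{0\})$ in $E\times\R$. The step I expect to require the most care is the Fr\'echet version of the lifting from $p(\cdot,1)$ to $p$: the uniformity of the remainder estimate for $p(\cdot,1)$ on bounded subsets of $E$ has to be transferred, via the substitution $w=(h-rx_0)/(1+tr)$, to a uniform estimate for $p$ on bounded subsets of $E\times\R$. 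This requires checking that $(h,r)$ bounded forces $w$ bounded uniformly for small $t$ while the scalar factor $(1+tr)$ stays controlled; the verification is routine but must be organized carefully.
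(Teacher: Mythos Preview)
The paper does not supply its own proof of this theorem; it is quoted verbatim from \cite{Eyland_Sharp2} and used as a black box. So there is no ``paper's proof'' to compare against.

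Your argument is mathematically sound. The construction $f(x)=\tfrac12\bigl(p(x,1)+p(x,-1)\bigr)$, $g(x)=p(x,0)$ does produce an asymptotic seminorm on $E$; the sum rule (or, more elementarily, the observation that if a sum of nonnegative remainders tends to zero then each summand does) transfers differentiability of $f$ to each slice $p(\cdot,\pm1)$; the homogeneity lift to $(x_0,\pm1)$ and then to the cone $T$ is correct, and $T$ is indeed a dense $G_\delta$ in $E\times\R$. The Fr\'echet bookkeeping you flag is routine, as you say.

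The one structural point worth flagging is that your reverse implication rests entirely on Theorem~\ref{theorem_eq1}(3)--(4), which in the present paper is \emph{also} quoted from \cite{Eyland_Sharp2}. Within this paper that is a legitimate citation, but as an independent proof of Theorem~\ref{theorem_eq3} it is likely circular: in the source the seminorm-on-$E\times\R$ characterization is obtained \emph{from} the asymptotic-seminorm characterization, via essentially the same bridge you build (a seminorm on $E\times\R$ yields an asymptotic seminorm on $E$), applied in the opposite logical direction. Your write-up therefore establishes the equivalence of the two characterizations, but it does not give a self-contained proof that either one captures WASP/ASP; for that one needs the direct argument of \cite{Eyland_Sharp2}, which passes from an arbitrary continuous convex function on an open convex set to an associated asymptotic seminorm, rather than invoking Theorem~\ref{theorem_eq1}.
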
 

Theorem \ref{theorem_eq3} immediately implies

\begin{proposition}\label{prop_eq4} Let $E$ be a Baire lcs.\hfill
\begin{enumerate}
\item[{\rm (1)}] $E$ is WASP if and only if every continuous convex function $f: E \to \R$ is G\^ ateaux differentiable on a dense $G_{\delta}$ subset of $E$.
\item[{\rm (2)}] $E$ is ASP if and only if every continuous convex function $f: E \to \R$ is Fr\'echet differentiable on a dense $G_{\delta}$ subset of $E$.
\end{enumerate}
\end{proposition}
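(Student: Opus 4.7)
The plan is to reduce Proposition \ref{prop_eq4} to Theorem \ref{theorem_eq3} by exploiting the fact, noted just before that theorem, that every asymptotic seminorm on $E$ is a continuous convex function with domain all of $E$. Both directions of each equivalence are then essentially tautological once one lines up the class inclusions correctly.

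For the forward implication of (1), if $E$ is WASP, then by Definition \ref{def3} every continuous convex $f: D \to \R$ on any open convex $D \subseteq E$ is G\^ateaux differentiable on a dense $G_\delta$ subset of $D$. Specializing to the open convex set $D = E$ yields precisely the stated property. For the reverse implication, suppose every continuous convex $f: E \to \R$ is G\^ateaux differentiable on a dense $G_\delta$ subset of $E$. Any asymptotic seminorm $f$ on $E$ is, by definition, a continuous function $E \to \R$, and by the observation immediately preceding Theorem \ref{theorem_eq3} it is convex. Hence the hypothesis applies to every asymptotic seminorm, and Theorem \ref{theorem_eq3}(1) (which uses the Baire hypothesis on $E$) concludes that $E$ is WASP.

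Part (2) is proved by an identical argument with G\^ateaux differentiability replaced by Fr\'echet differentiability throughout, invoking Theorem \ref{theorem_eq3}(2) instead of (1).

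There is no real obstacle here: the substantive content lies in Theorem \ref{theorem_eq3}, and the role of Proposition \ref{prop_eq4} is only to record that the class of test functions may be enlarged from asymptotic seminorms to all continuous convex functions $E \to \R$ without changing the characterization. The Baire assumption on $E$ is inherited directly from Theorem \ref{theorem_eq3} and is used only in the nontrivial direction.
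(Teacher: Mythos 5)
Your proof is correct and is exactly the argument the paper intends: the paper states that Proposition \ref{prop_eq4} follows immediately from Theorem \ref{theorem_eq3}, and your write-up simply makes explicit the two inclusions (specializing $D=E$ for the easy direction, and the fact that asymptotic seminorms are continuous convex functions on all of $E$ for the converse) that make this immediate.
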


\section{Application to products}\label{Application} 
\bigskip

Theorem \ref{theorem_eq1} prompts a question whether the above defined classes of lcs are closed under products.
If $\{E_i: i =1, \dots, n\}$ is a finite family of Banach Asplund spaces, then so is the product $\prod_{i=1}^n E_i$.
Does $E$ is a Banach WASP imply that $E \times \R$ also is WASP,
is a long standing open question. For general lcs the answer is not known either.

M. Fabian showed that if $E$ is a Banach GDS, then so is the product $E \times \R$ (see \cite[Proposition 6.5]{Phelps}).
Later this result was extended to the products $E \times Y$, where is $Y$ any separable Banach space \cite{Cheng_Fabian}. 
Note that R. Eyland and B. Sharp \cite[3.2]{Eyland_Sharp2} proved for any lcs $E$: if $E$ is GDS (FDS), then so is $E\times \R$, respectively. 
A further generalization for the product of lcs was obtained in \cite{Shen_Cheng}: let a lcs $E$ be GDS and let $Y$ be the product $\prod_{\alpha\in A} E_{\alpha}$ 
of any family of separable Fr\'echet spaces; then the product $E \times Y$ also is GDS.

To the best of our knowledge, the questions about the preservation of WASP and ASP for arbitrary products of locally convex spaces have been not considered in the literature before.
First, we observe how to extend Theorem \ref{theorem_sharp}. Note also that the product of two normed Baire spaces need not to be Baire \cite{Valdivia}.

\begin{proposition}\label{prop_1}
Let $E$ be a separable Baire topological linear space. Then for every separable Fr\'echet space $Y$ 
the product $E \times Y$ is WASP.
\end{proposition}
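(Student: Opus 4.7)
The plan is to reduce the proposition directly to Theorem~\ref{theorem_sharp} by verifying that $E\times Y$ satisfies its hypotheses, namely, that $E\times Y$ is a separable Baire topological linear space. The first two properties are routine: the product of topological linear spaces is a topological linear space in the product topology, and the product of two separable spaces is separable (the product of countable dense sets is countable and dense).

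The substantive step is the Baire-ness of $E\times Y$. The remark preceding the proposition, that the product of two normed Baire spaces need not be Baire, shows that the Baire property of the two factors is by itself insufficient; one must exploit additional structure on $Y$. Here $Y$ is a separable Fr\'echet space, hence separable and completely metrizable, i.e.\ Polish, and in particular \v{C}ech-complete (equivalently, strong Choquet). A classical product theorem, essentially due to Oxtoby and provable via the Banach--Mazur game, asserts that the product of a Baire space with a \v{C}ech-complete (strong Choquet) space is again Baire. Applying this to the Baire space $E$ and the \v{C}ech-complete space $Y$ yields that $E\times Y$ is Baire.

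With all three hypotheses of Theorem~\ref{theorem_sharp} verified, we conclude that $E\times Y$ is WASP. The only real obstacle is the Baire-ness of the product, which is handled by the Oxtoby-type product theorem; its role is precisely to convert the extra Fr\'echet regularity on $Y$ into the needed Baire property of $E\times Y$, so that Sharp's theorem can be invoked. Everything else is formal bookkeeping.
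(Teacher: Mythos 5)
Your proof is correct and follows the same route as the paper: verify that $E\times Y$ is separable and Baire, then invoke Theorem~\ref{theorem_sharp}. The only difference is the auxiliary product theorem used for Baire-ness --- the paper cites Moors' result that the product of a Baire space with a metrizable hereditarily Baire space is Baire, whereas you use the Oxtoby-type theorem for \v{C}ech-complete (equivalently here, second-countable Baire) second factors; both apply since $Y$ is Polish.
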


\begin{proof} Clearly, $E \times Y$ is separable. Moreover, the product $E \times Y$ is Baire.
This claim follows from the remarkable result of W. Moors \cite{Moors_Baire}:
The product of a Baire space with a metrizable hereditarily Baire space is again a Baire space. Evidently, any completely metrizable space $Y$ is
hereditarily Baire, i.e. every closed subspace of $Y$ is Baire. Thus, $E \times Y$ is WASP by Theorem \ref{theorem_sharp}.
\end{proof} 

Next we prove a general statement which is one of the main results of our paper.
Undoubtedly, our arguments are inspired by the proof of \cite[Theorem 5.5]{Sharp} stating that any lcs with a weak topology is ASP.

\begin{theorem}\label{theorem_1}
Let $E$ be a separable Baire lcs and let $Y$ be the product $\prod_{\alpha\in A} E_{\alpha}$ 
of any family of separable Fr\'echet spaces. 
Then the product $E \times Y$ is WASP.
\end{theorem}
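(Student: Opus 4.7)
The plan is to reduce, via Theorem \ref{theorem_eq1}(3), to checking G\^ ateaux differentiability of seminorms on $(E\times Y)\times\R$, and then to exploit the fact that any continuous seminorm on a product of lcs factors through a finite subproduct, where the separable Baire case of Theorem \ref{theorem_sharp} applies.

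The first step is to verify that $E \times Y$ is Baire, so that Theorem \ref{theorem_eq1}(3) is applicable. When $A$ is countable, $Y$ itself is a separable Fr\'echet space, and the Baireness of $E \times Y$ follows directly from Moors' theorem exactly as in Proposition \ref{prop_1}. For uncountable $A$, $Y$ is no longer metrizable and Moors' theorem as cited does not literally apply; one has to invoke a stronger product theorem, e.g., the fact that each $E_\alpha$ is Polish (hence pseudo-complete in the sense of Oxtoby) forces $Y$, and consequently $E\times Y$, to be Baire. I expect this to be the technically most delicate point of the argument.

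Granted the Baireness, let $p$ be an arbitrary continuous seminorm on $X := (E \times Y) \times \R = E \times \prod_{\alpha\in A} E_\alpha \times \R$. Continuity of $p$ at the origin produces a basic $0$-neighborhood of the form $V \times \prod_{\alpha\in F} V_\alpha \times \prod_{\alpha\notin F} E_\alpha \times W$ on which $p \le 1$, for some finite $F \subset A$. Given any $z \in X$ whose components in $E$, $\{E_\alpha\}_{\alpha\in F}$ and $\R$ all vanish, the whole line $\R z$ lies in this neighborhood, so $|t|\,p(z)=p(tz)\le 1$ for every $t\in\R$, forcing $p(z)=0$. By subadditivity $p$ is then invariant under translations by such $z$, hence factors as $p = \bar p \circ \pi$, where $\pi:X \to Z_0 := E \times \prod_{\alpha \in F} E_\alpha \times \R$ is the canonical projection and $\bar p$ is a continuous seminorm on $Z_0$.

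The space $Z_0$ is separable, and a finite iteration of Moors' theorem (using that each $E_{\alpha}$ with $\alpha\in F$ and $\R$ itself are completely metrizable, hence hereditarily Baire) shows that $Z_0$ is Baire. By Theorem \ref{theorem_sharp}, $Z_0$ is WASP, so $\bar p$ is G\^ ateaux differentiable on a dense $G_\delta$ subset $D \subset Z_0$. Since $\pi$ is a surjective open continuous projection between product lcs and the directional derivatives satisfy $p'(z;v) = \bar p'(\pi(z);\pi(v))$, the set of G\^ ateaux differentiability points of $p$ on $X$ coincides with $\pi^{-1}(D)$, which remains dense and $G_\delta$ in $X$. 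Invoking Theorem \ref{theorem_eq1}(3) then yields that $E \times Y$ is WASP.
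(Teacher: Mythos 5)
Your proof is correct, and while it shares the paper's central idea---that a continuous convex object on the product must factor through a finite subproduct, where the separable Baire case (Theorem \ref{theorem_sharp}) applies---the implementation is genuinely different. The paper works directly with an arbitrary continuous convex function $f$ on $E\times Y$ and can only factor it \emph{locally}: on a basic neighbourhood $U$ where $f$ is bounded, the restriction to each fibre of the tail product is a bounded convex function on a linear space, hence constant; this forces the paper to combine two auxiliary facts (bounded convex functions are constant, and the Baire localization lemma allowing one to patch dense $G_\delta$ sets over an open cover) together with Proposition \ref{prop_1} applied to $E\times Y_I$. You instead invoke the Eyland--Sharp characterization (Theorem \ref{theorem_eq1}(3)) to reduce to continuous seminorms on $(E\times Y)\times\R$, for which the factorization is \emph{global} and elementary ($p$ vanishes on the tail subspace, hence is invariant under translation by it), so no localization or patching is needed. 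The trade-off is that you lean on Theorem \ref{theorem_eq1}(3) as a black box, whereas the paper's argument is self-contained modulo Proposition \ref{prop_1}. Two small remarks: your worry about Baireness of $E\times Y$ for uncountable $A$ is legitimate and is exactly the point the paper addresses by citing the remark at the end of Moors' paper (which extends his theorem to cover such products); your alternative via pseudo-completeness of Polish spaces and Oxtoby's product theorem is a valid substitute. Also, you only need the inclusion $\pi^{-1}(D)\subseteq\Omega$ rather than the claimed equality of the two sets of differentiability points, but the inclusion is the direction you actually verify, so nothing is lost.
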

\begin{proof} We shall use two apparently folklore facts.
\\
\\
{\bf Fact 1.} {\it Let $f: X \to \R$ be a convex function defined on a linear space $X$. If $f$ is bounded then it is constant.\\
Proof.} If not, without loss of generality we may assume that $f(0)=0$ and there is $x\in X$ such that $f(x) > 0$. Then for every $\lambda > 1$ we have
$$f(x) = f(\frac{\lambda-1}{\lambda}\cdot 0+\frac{1}{\lambda}\cdot \lambda x) \leq \frac{1}{\lambda}f(\lambda x).$$
It follows that $f(\lambda x) \geq \lambda f(x)$, which contradicts to the boundedness of $f$. 
\\
\\
{\bf Fact 2.} (see \cite[Proposition 1.3.2]{Fabian})\hfill

 {\it Let $Z$ be a Baire topological space and let $\sU$ be an open cover of $Z$. Assume that $\Omega$ is a subset of $Z$ such that for any $U \in \sU$
the intersection $U\cap \Omega$ contains a dense $G_{\delta}$ subset in $U$. Then the whole set $\Omega$ contains a dense $G_{\delta}$ subset in $Z$.
}
\\
\\
In order to prove Theorem \ref{theorem_1} we fix a continuous convex function $f: E \times Y \to \R$.
We may assume that the set of indices $A$ is infinite, otherwise $Y$ is a separable Fr\'echet space, and we apply Proposition \ref{prop_1}. 
Denote the set of G\^ ateaux differentiability points of $f$ in $Z = E \times Y$ by $\Omega$.
According to Remark at the end of paper \cite{Moors_Baire}, the space $Z$ is Baire. 
Pick any point $P_0 \in Z$.
We shall find an open neighborhood $U$ of $P_0$ such that the intersection $U\cap \Omega$ contains a dense $G_{\delta}$ subset in $U$.

Since $f$ is continuous, it is bounded on some basic open convex neighborhood $U \subset Z$ of $P_0$.
Denote by $U = G \times \prod_{\alpha\in I} G_{\alpha} \times \prod_{\alpha\in A\setminus I} E_{\alpha}$,
where $I\subset A$ is finite, $G \subset E$ and $G_{\alpha} \subset E_{\alpha}$ are open convex neighborhoods.
Also, denote by $\pi$ the natural projection of $Y$ onto $Y_I = \prod_{\alpha\in I} E_{\alpha}$ and let $Z_I = E \times Y_I$.

Now we define a continuous convex function $h: G \times \prod_{\alpha\in I} G_{\alpha} \to \R$ by the rule: $h(x, \pi(y)) = f(x, y)$,
whenever $(x, y) \in U$.
Function $h$ is correctly defined because for every fixed $(x, a)\in G \times \prod_{\alpha\in I} G_{\alpha}$ 
a continuous convex function $\varphi(b) = f(x, (a, b)): \prod_{\alpha\in A\setminus I} E_{\alpha}\to \R$
is constant, by Fact 1.

According to Proposition \ref{prop_1}, $E \times Y_I$ is WASP. Therefore, function $h$ has a dense $G_{\delta}$ set $V$ 
of G\^ ateaux differentiability points in its open convex domain, and then function $f$ has a dense $G_{\delta}$ set $W = V \times \prod_{\alpha\in A\setminus I} E_{\alpha}$ 
of G\^ ateaux differentiability points in $U$. Indeed, let $z_0 = (x_0, y_0) \in W$.
For any $z=(x, y) \in  Z$ there is $\lambda > 0$ such that for all $|t| < \lambda$ and each $h' \in (Z_I)'$ we have
$$\left|\frac{h((x_0, \pi(y_0))+t (x, \pi(y)))- h(x_0, \pi(y_0))}{t} - \left\langle h', (x_0, \pi(y_0)) \right\rangle\right|=$$
$$=\left|\frac{f((x_0, y_0)+t (x, y))- f(x_0, y_0)}{t} - \left\langle f', (x_0, y_0) \right\rangle\right|,$$
	where $f' \in Z'$ is defined by $\left\langle f', (x, y) \right\rangle = \left\langle h', (x, \pi(y))\right\rangle$.
	This means that function $f$ is G\^ ateaux differentiable at $(x_0, y_0)$, by Definition \ref{def2}, because $h$ is G\^ ateaux differentiable at $(x_0, \pi(y_0))$. 

 Applying Fact 2 we conclude that the whole set $\Omega$ contains a dense $G_{\delta}$ subset in $Z$, and the proof is complete.
\end{proof}


\begin{theorem}\label{theorem_2}
Let $\{E_{\alpha}: \alpha \in A\}$ be any family of Banach spaces. 
Then the product $Y=\prod_{\alpha\in A} E_{\alpha}$ is an ASP lcs if and only if each $E_{\alpha}$ is ASP.
\end{theorem}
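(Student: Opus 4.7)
The plan is to reduce Fréchet differentiability on $Y$ to Fréchet differentiability on finite subproducts (for the ``$\Leftarrow$'' direction) or on a single factor (for the ``$\Rightarrow$'' direction), closely mimicking the scheme of Theorem \ref{theorem_1} while keeping track of the Fréchet (not just G\^ateaux) condition. Throughout, I would use that $Y$ is a Baire space (e.g.\ by Oxtoby's theorem on products of completely metrizable spaces), so that Proposition \ref{prop_eq4}(2) applies and ASP for $Y$ can be tested against continuous convex functions defined on all of $Y$.

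For the forward direction, given a continuous convex $f: D \to \R$ on an open convex $D \subset E_{\alpha}$, I would consider its lift $\tilde f(x, y) = f(x)$ on the open convex set $D \times \prod_{\beta \ne \alpha} E_{\beta} \subset Y$. The ASP hypothesis on $Y$ yields a dense $G_{\delta}$ set $G$ of Fréchet differentiability points of $\tilde f$. Using that $B \times \{0\}$ is bounded in $Y$ whenever $B \subset E_{\alpha}$ is bounded, Fréchet differentiability of $\tilde f$ at $(x_0, y_0)$ restricts to Fréchet differentiability of $f$ at $x_0$. Hence the projection $p_{\alpha}(G)$ is dense in $D$ and consists of Fréchet differentiability points of $f$; since $E_{\alpha}$ is a Banach space, that set is automatically $G_{\delta}$ (as recalled in the introduction), giving the required dense $G_{\delta}$.

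For the backward direction, I would fix a continuous convex $f: Y \to \R$ and cover $Y$ by basic open convex neighborhoods $U = \prod_{\alpha\in I} G_{\alpha} \times \prod_{\alpha\notin I} E_{\alpha}$ (with $I \subset A$ finite) on which $f$ is bounded. Exactly as in the proof of Theorem \ref{theorem_1}, Fact 1 forces $f$ to be constant in the coordinates outside $I$ on $U$, so $f|_U = h \circ \pi_I$ for some continuous convex $h: \prod_{\alpha\in I} G_{\alpha} \to \R$. Since finite products of Banach Asplund spaces are Banach Asplund, $h$ has a dense $G_{\delta}$ set $V$ of Fréchet differentiability points in $\prod_{\alpha\in I} G_{\alpha}$. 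The fact that every bounded set in $Y$ projects to a bounded set in the finite subproduct $\prod_{\alpha\in I} E_{\alpha}$ then implies $f$ is Fréchet differentiable on the dense $G_{\delta}$ set $V \times \prod_{\alpha\notin I} E_{\alpha} \subset U$. Fact 2 finally assembles these local dense $G_{\delta}$ pieces into a dense $G_{\delta}$ in $Y$, completing the proof via Proposition \ref{prop_eq4}(2).

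The main technical point — and the only place where the Fréchet (as opposed to G\^ateaux) case genuinely requires care — is the bidirectional transfer of the uniform-over-bounded-sets estimate of Definition \ref{def1} between $f$ and its avatars $\tilde f$ or $h$. Once one has identified bounded sets in $Y$ with (products of) bounded sets in the relevant factors, the Fréchet estimate for one of the functions converts directly into the corresponding estimate for the other, and the remainder of the argument is structurally identical to the WASP proof of Theorem \ref{theorem_1}.
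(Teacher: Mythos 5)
Your proposal is correct and follows essentially the same route as the paper: the ``$\Leftarrow$'' direction is the factorization-through-a-finite-subproduct argument of Theorem \ref{theorem_1} (Fact 1, Fact 2, Baireness of the product) plus the observation that bounded sets project to bounded sets, and the ``$\Rightarrow$'' direction transfers Fréchet differentiability along the coordinate projection. The only cosmetic difference is that you carry out the derivative transfer in the ``$\Rightarrow$'' direction by hand (via the bounded set $B\times\{0\}$ and the automatic $G_{\delta}$ property in Banach spaces), where the paper instead composes $f$ with the bound covering projection and cites the Eyland--Sharp factor theorem together with the equivalence of ASP and FDS for Banach spaces.
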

\begin{proof} 
First, any product of Banach spaces is a Baire topological space. Second, any finite product of Asplund Banach spaces is Asplund. Then we literally repeat the proof of Theorem \ref{theorem_2}.
Again, any continuous convex function defined on the product $Z$
factorizes thru a finite subproduct $\prod_{\alpha\in I} E_{\alpha}$. The only difference is the following.
At the end, in order to show Fr\'echet differentiability we need to add a trivial remark that for every bounded set in the product
its image under the projection to a subproduct is bounded.

In the opposite direction, let $f: E_{\beta} \to \R$ be a continuous convex function, where $\beta \in A$ is fixed.
Denote the projection of the product $Y$ onto $E_{\beta}$ by $\pi$. Observe that the projection $\pi$ is an open continuous and bound covering mapping,
i.e. for every bounded subset $B$ of $E_{\beta}$ there is a bounded subset $C$ of $Z$ such that $B \subset \pi(C)$.
Indeed, it is enough to include to $C$ all points which coordinates are zero for all indices $\alpha \neq \beta$ and identical to the points from $B$ on the place $\beta$.
Take a composition $f \circ \pi$. It is a continuous convex function defined on the Asplund lcs $Z$.
Hence, function $f \circ \pi$ has a dense set $\Omega \subset Z$ of Fr\'echet differentiability points. Evidently, $\pi(\Omega)$ is dense in $E_{\beta}$.
Notice that since $\pi$ is continuous and bound covering mapping, $f$ is Fr\'echet differentiable at $\pi(x)$ if and only if $f \circ \pi$  is Fr\'echet differentiable at $x$
 (see \cite{Eyland_Sharp1}). So, $f$ is Fr\'echet differentiable at each point of $\pi(\Omega)$. Finally, the Banach space $E_{\beta}$ is FDS and we are done since
for Banach spaces ASP and FDS are equivalent.
\end{proof}

More information about the behavior of ASP and WASP under linear continuous onto mappings of lcs can be found in \cite[Section 1]{Eyland_Sharp1}.

\begin{theorem}\label{theorem_map}\cite[Theorem 1.3]{Eyland_Sharp1}
Let $X$ be a Fr\'echet space, $Y$ a regular linear space, and let
$T: X  \to Y$ be continuous, linear, open and onto.
\begin{enumerate}
\item[{\rm (1)}] If $X$ is WASP, then so is $Y$.
\item[{\rm (2)}] Suppose that $T$ is also bound covering. If $X$ is ASP, then so is $Y$.
\end{enumerate}
\end{theorem}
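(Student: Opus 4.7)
The plan is to reduce to continuous convex functions defined on all of $Y$ via Proposition \ref{prop_eq4}, pull them back to $X$ using $T$, use WASP (resp.\ ASP) of $X$ to produce a dense $G_\delta$ of differentiability points, and finally transfer this back to $Y$ through $T$. First, $N:=\ker T$ is a closed subspace of the Fr\'echet space $X$, and the induced continuous linear bijection $X/N\to Y$ is open because $T$ is open, so it is a topological isomorphism; hence $Y$ is Fr\'echet and in particular Baire. By Proposition \ref{prop_eq4} it suffices to verify that every continuous convex $f:Y\to\R$ is G\^ateaux (resp.\ Fr\'echet) differentiable on a dense $G_\delta$ subset of $Y$. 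Setting $g:=f\circ T$ gives a continuous convex function on $X$, and the WASP (resp.\ ASP) hypothesis yields a dense $G_\delta$ set $A\subset X$ of G\^ateaux (resp.\ Fr\'echet) differentiability points of $g$.

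Next I would verify that $T(A)$ consists of differentiability points of $f$. Since $g$ is constant on cosets of $N$, at any $x_0\in A$ the derivative $g'(x_0)$ annihilates $N$, so it factors as $g'(x_0)=\varphi\circ T$ with $\varphi\in Y'$ (continuity of $\varphi$ is automatic from openness of $T$). A direct difference-quotient computation with $v=T(u)$ shows that $f$ is G\^ateaux differentiable at $T(x_0)$ with derivative $\varphi$. For part (2), the bound-covering hypothesis lets one lift every bounded $B\subset Y$ to a bounded $C\subset X$ with $T(C)\supset B$; this converts uniform smallness over $u\in C$ into uniform smallness over $v\in B$ and upgrades the above to Fr\'echet differentiability. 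Consequently $T(A)\subset B_f$, where $B_f$ denotes the set of G\^ateaux (resp.\ Fr\'echet) differentiability points of $f$.

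The crux is to show that $T(A)$ contains a dense $G_\delta$ subset of $Y$. My plan is to apply Michael's selection theorem to the lower semicontinuous multi-function $y\mapsto T^{-1}(y)$, which has closed convex nonempty values on the paracompact metrizable space $Y$, producing a continuous cross-section $s:Y\to X$ with $T\circ s=\mathrm{id}_Y$. The map $(y,n)\mapsto s(y)+n$ is then a homeomorphism $Y\times N\to X$ that identifies $T$ with the first-coordinate projection. Under this homeomorphism $A$ corresponds to a dense $G_\delta$ subset $\widetilde A\subset Y\times N$, and the Kuratowski--Ulam theorem in the product of Baire spaces $Y\times N$ gives that for a comeager set of $y\in Y$ the slice $\widetilde A^{y}=\{n\in N:(y,n)\in\widetilde A\}$ is comeager in $N$, hence nonempty; therefore $T(A)=\pi_Y(\widetilde A)$ is comeager in $Y$, and in a Baire space it contains a dense $G_\delta$. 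Applying Proposition \ref{prop_eq4} a second time finishes the proof. The hard part is precisely this last step: a dense $G_\delta$ in $X$ does not automatically map to a dense $G_\delta$ under the non-injective $T$, and the clean route above needs both a continuous section (via Michael) and a Kuratowski--Ulam category theorem on the product, the latter requiring a countable $\pi$-base on one factor; handling the fully non-separable Fr\'echet case will likely require either a reduction to a separable subquotient or a more delicate saturation argument using that $B_g$ itself is $N$-invariant.
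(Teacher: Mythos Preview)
The paper does not prove this theorem; it is simply quoted as \cite[Theorem~1.3]{Eyland_Sharp1} and used as a black box, so there is no ``paper's proof'' to compare your attempt against.

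Your strategy is the natural one and the first two thirds are carried out correctly: $Y\cong X/N$ is Fr\'echet (hence Baire), Proposition~\ref{prop_eq4} reduces to globally defined convex $f$, the pullback $g=f\circ T$ is continuous convex, and your verification that $g'(x_0)$ factors through $T$ (and, with bound covering, that Fr\'echet differentiability transfers) is fine. The crucial structural point, which you state only at the very end, is that the \emph{entire} set $A_g$ of differentiability points of $g$ is $N$-saturated; indeed $A_g=T^{-1}(B_f)$ exactly.

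The gap is precisely where you locate it, and it is genuine. Your Michael-selection reduction to a product $Y\times N$ is valid (Michael applies: $Y$ is metrizable hence paracompact, fibers are closed affine subsets of the Fr\'echet space $X$, and lower semicontinuity follows from openness of $T$), but Kuratowski--Ulam does require a countable $\pi$-base on one factor and can fail without it, so the argument as written does not cover non-separable $N$. The saturation $A_g=T^{-1}(B_f)$ suggests bypassing Kuratowski--Ulam altogether and reducing to the statement ``$T^{-1}(S)$ meagre $\Rightarrow S$ meagre'' for open continuous surjections; note that for such $T$ one has $T^{-1}(\overline S)=\overline{T^{-1}(S)}$, so $S$ nowhere dense $\Leftrightarrow T^{-1}(S)$ nowhere dense, and hence $S$ meagre $\Rightarrow T^{-1}(S)$ meagre. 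The reverse implication you need, however, is not formal: a meagre cover of $T^{-1}(S)$ by nowhere-dense sets need not be saturated, and pushing it forward by $T$ can destroy nowhere-density. Closing this requires an additional ingredient (completeness of the fibres, a category-theoretic lifting lemma for open maps with \v{C}ech-complete fibres, or the specific techniques of \cite{Eyland_Sharp1}); your proposal correctly identifies this as unfinished but does not supply it.
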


Recall that the $\Sigma$-product of topological spaces $\{X_{\alpha}: \alpha \in A\}$, with
a base point $b = (b_{\alpha}) \in \prod_{\alpha\in A} X_{\alpha}$,
 is the subspace of the product $\prod_{\alpha\in A} X_{\alpha}$
consisting of points $x = (x_{\alpha})$ with $x_{\alpha}= b_{\alpha}$ for all but countably many $\alpha$.
Naturally, $\Sigma$-product is considered with a topology induced from the whole product.
It is easily seen that if $b_{\alpha}$ is the zero element of a lcs $E_{\alpha}$, then
the $\Sigma$-product of any family $\{E_{\alpha}: \alpha \in A\}$ is a locally convex space. 

It has been shown in \cite{CP} that the $\Sigma$-product of any family of completely metrizable spaces is a hereditarily Baire space.
Moreover, using the strategy of the main results of \cite{CP}, \cite{Moors_Baire} 
one can show that the product $Z = E \times Y$ is Baire, for every Baire space $E$ and $\Sigma$-product $Y$ of a family of completely metrizable spaces.

Therefore, the following counterparts of Theorems \ref{theorem_1} and \ref{theorem_2} are valid, with the same proofs, respectively.

\begin{proposition}\label{P_1}
Let $E$ be a separable Baire lcs and let $Y$ be the $\Sigma$-product 
of any family of separable Fr\'echet spaces. 
Then the product $E \times Y$ is WASP.
\end{proposition}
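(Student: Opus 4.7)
The plan is to follow the proof of Theorem \ref{theorem_1} essentially line by line, since the only structural change is that $Y$ is now a $\Sigma$-product rather than a full product. First I would invoke the Baire property of $Z = E \times Y$ noted in the paragraph preceding the statement (established by the strategy of \cite{CP} and \cite{Moors_Baire}). Fix an arbitrary continuous convex function $f: Z \to \R$ and let $\Omega \subset Z$ be its set of G\^ateaux differentiability points; by Fact 2 it suffices to show that every point $P_0 \in Z$ has an open neighborhood $U$ such that $U \cap \Omega$ contains a dense $G_\delta$ subset of $U$.

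Next I would pick a basic open convex neighborhood $U$ of $P_0$ on which $f$ is bounded. Any basic open set of the $\Sigma$-product has, in the ambient product, the form
\[
U = G \times \Bigl(\prod_{\alpha\in I} G_\alpha \times \prod_{\alpha\in A\setminus I} E_\alpha\Bigr) \cap (E \times Y),
\]
with $I \subset A$ finite and $G$, $G_\alpha$ open convex. The crucial factorization step is the same as in Theorem \ref{theorem_1}: for each fixed $(x,a) \in G \times \prod_{\alpha\in I} G_\alpha$, the slice through $(x,a)$ inside $U$ is a translate of the $\Sigma$-product $\Sigma_{\alpha\in A\setminus I} E_\alpha$, which is itself a linear space. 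The restriction of $f$ to this slice is a bounded continuous convex function on a linear space, hence constant by Fact 1. Therefore $f$ factors as $f(x,y) = h(x,\pi(y))$, where $\pi: Y \to Y_I := \prod_{\alpha\in I} E_\alpha$ is the projection and $h$ is a well-defined continuous convex function on $G \times \prod_{\alpha\in I} G_\alpha$.

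Then I would apply Proposition \ref{prop_1} to the separable Fr\'echet space $Y_I$: the product $E \times Y_I$ is WASP, so $h$ possesses a dense $G_\delta$ set $V$ of G\^ateaux differentiability points in its domain. Lifting back, $W = V \times \prod_{\alpha\in A\setminus I} E_\alpha \cap U$ is a dense $G_\delta$ subset of $U$, and the identical computation given in the proof of Theorem \ref{theorem_1} shows that each point of $W$ is a G\^ateaux differentiability point of $f$, with derivative $\langle f', (x,y)\rangle = \langle h', (x,\pi(y))\rangle$. Invoking Fact 2 over the open cover by such neighborhoods $U$ completes the proof.

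The only point at which the $\Sigma$-product structure enters non-trivially is the factorization step, and the potential obstacle is ensuring that the slice in $U$ transverse to the finite subproduct is genuinely a linear subspace so that Fact 1 applies. This is immediate from the definition of the $\Sigma$-product with base point $0$, since the support of a sum is the union of supports and hence countable, so the slice is closed under addition and scalar multiplication. With this verified, no further adaptation beyond what appears in Theorem \ref{theorem_1} is needed, which is exactly why the authors assert that the same proof works.
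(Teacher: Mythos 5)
Your proof is correct and is essentially the paper's own argument: the paper simply asserts that Proposition \ref{P_1} follows ``with the same proof'' as Theorem \ref{theorem_1}, using the Baire property of $E \times Y$ established via \cite{CP} and \cite{Moors_Baire}. You have also correctly identified and verified the one point where the $\Sigma$-product structure matters, namely that the slice transverse to the finite subproduct is still a linear space so that Fact 1 applies.
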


\begin{proposition}\label{P_2}
Let $\{E_{\alpha}: \alpha \in A\}$ be any family of Banach spaces. 
Then the $\Sigma$-product $Y$ of $\{E_{\alpha}: \alpha \in A\}$ is an ASP lcs if and only if each $E_{\alpha}$ is ASP.
\end{proposition}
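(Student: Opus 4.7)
The plan is to transcribe the proof of Theorem \ref{theorem_2} to the $\Sigma$-product setting, relying on two facts announced in the paragraph preceding the statement: the $\Sigma$-product $Y$ is a Baire locally convex space (by the result of \cite{CP}), and any finite product of Asplund Banach spaces is itself Asplund. I would handle the two implications separately.

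For the ``only if'' direction, fix $\beta \in A$ and a continuous convex $f \colon E_{\beta} \to \R$. Let $\pi_{\beta} \colon Y \to E_{\beta}$ be the coordinate projection; it is linear, continuous, open, and bound covering, because any bounded $B \subset E_{\beta}$ lifts to a bounded subset of $Y$ by padding with zeros outside $\beta$ (the padded tuple has at most one nonzero coordinate, so it lies in $Y$). The composition $f \circ \pi_{\beta}$ is continuous and convex on the ASP space $Y$, hence Fr\'echet differentiable on a dense $G_{\delta}$ set $\Omega \subset Y$. By the criterion of \cite{Eyland_Sharp1} used at the end of the proof of Theorem \ref{theorem_2}, Fr\'echet differentiability of $f \circ \pi_{\beta}$ at $x$ is equivalent to Fr\'echet differentiability of $f$ at $\pi_{\beta}(x)$, and $\pi_{\beta}(\Omega)$ is dense in $E_{\beta}$ by openness of $\pi_{\beta}$. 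Thus $E_{\beta}$ is FDS, and since $E_{\beta}$ is Banach, FDS coincides with ASP.

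For the ``if'' direction, assume each $E_{\alpha}$ is ASP and fix a continuous convex $f \colon Y \to \R$. Since $Y$ is Baire, Fact 2 reduces the problem to showing that every point $P_0 \in Y$ has an open neighborhood $U$ in which the Fr\'echet differentiability points of $f$ contain a dense $G_{\delta}$ subset of $U$. Choose a basic convex neighborhood $U = \bigl(\prod_{\alpha \in I} G_{\alpha} \times \prod_{\alpha \in A \setminus I} E_{\alpha}\bigr) \cap Y$ of $P_0$ with $I \subset A$ finite, on which $f$ is bounded. For each fixed $a \in \prod_{\alpha \in I} G_{\alpha}$, the map $b \mapsto f(a, b)$ is a bounded convex function on the $\Sigma$-subproduct of $(E_{\alpha})_{\alpha \in A \setminus I}$, which is a linear space; hence it is constant by Fact 1. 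Therefore $f$ factors on $U$ as $f = h \circ \pi_I$, where $\pi_I \colon Y \to Y_I = \prod_{\alpha \in I} E_{\alpha}$ is the projection to the finite subproduct and $h$ is a continuous convex function on the open convex set $\prod_{\alpha \in I} G_{\alpha} \subset Y_I$. Since $Y_I$ is a finite product of ASP Banach spaces, it is Asplund, so $h$ has a dense $G_{\delta}$ set $V_0$ of Fr\'echet differentiability points; lifting through the open bound-covering projection $\pi_I$ exactly as at the end of the proof of Theorem \ref{theorem_2} yields a dense $G_{\delta}$ subset $W = \pi_I^{-1}(V_0) \cap U$ of $U$ consisting of Fr\'echet differentiability points of $f$.

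The main technical point to verify is that the slicing step really takes place inside $Y$: one must check that appending any tail supported on countably many coordinates to a fixed finite tuple still produces a point of the $\Sigma$-product, and that the tail slice is genuinely a linear subspace so Fact 1 applies. Both issues reduce to the observation that finitely many coordinates in $I$ together with a countable support outside $I$ still give a countable support. After this bookkeeping, the remaining lifting computation---that Fr\'echet differentiability of $h$ at $\pi_I(x_0)$ with derivative $h'$ transfers to Fr\'echet differentiability of $f$ at $x_0$ with derivative $y' \mapsto \langle h', \pi_I(y')\rangle$, using that $\pi_I$ maps bounded sets to bounded sets---is the same bound-covering argument already invoked in Theorem \ref{theorem_2}.
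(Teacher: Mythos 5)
Your proposal is correct and follows essentially the same route as the paper, which simply declares that Proposition \ref{P_2} has ``the same proof'' as Theorem \ref{theorem_2}, using that the $\Sigma$-product of completely metrizable spaces is (hereditarily) Baire by \cite{CP}. The only genuinely new point in the $\Sigma$-product setting --- that the zero-padded lifts stay in $Y$ and that each tail slice over a fixed finite tuple is again a linear space (the $\Sigma$-product of the remaining factors), so Fact 1 applies --- is exactly the bookkeeping you identify and verify.
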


\section{Open questions and remarks}\label{Questions} 
\bigskip

We lack the counterexamples to the very basic questions.

\begin{question}\label{problem_product1} Let $E$ be a lcs.
If $E$ is Asplund, is then $E\times \R$ Asplund?
\end{question}

If $E$ additionally is assumed to be Baire, Question \ref{problem_product1} appears as \cite[Conjecture 4.0]{Eyland_Sharp2}.

\begin{question}\label{problem_product2} Let $E$ be a lcs.
If $E\times \R$ is Asplund (Weak Asplund), is then $E$ Asplund (Weak Asplund, respectively)?
\end{question}

If $E$ additionally is assumed to be Baire, Question \ref{problem_product2} has been answered positively \cite{Eyland_Sharp2}.

\begin{question}\label{problem_product3} Let $E$ and $F$ be lcs.
  If $E\times F$ is Asplund (Weak Asplund), is then $E$ Asplund (Weak Asplund, respectively)? 
\end{question}

The answer to Question \ref{problem_product2} is affirmative, if additionally $E$ and $F$ are assumed to be Fr\'echet spaces. It follows from
Theorem \ref{theorem_map}.

\begin{remark}\label{remark} We should warn the reader that paper \cite{Lee} contains several results which proofs are not justified.
Let us provide some details.

 One of the claims of \cite[Theorem 5.5]{Lee} is the following: Let $E$ be a Baire lcs, then $E$ is ASP if and only if 
every seminorm on $E$ is Fr\'echet differentiable on a dense $G_{\delta}$ subset of $E$.
Such a statement would resolve immediately mentioned above \cite[Conjecture 4.0]{Eyland_Sharp2}.

However, a very short and indirect proof in \cite{Lee} is incomplete. The only correctly proved relevant result is Theorem
\ref{theorem_eq3}: A Baire lcs $E$ is ASP if and only if every {\it asymptotic seminorm} on $E$ is Fr\'echet differentiable on a dense $G_{\delta}$ subset of $E$.

Furthermore, \cite[Theorem 7.1]{Lee} claims a very impressive generalization of the Asplund's theorem, namely:
If the strong dual $E^{*}$ of a Baire lcs $E$ is separable, then $E$ is ASP.

We do not have a counterexample to this statement, but the proof provided in \cite{Lee} is incorrect.
First, the authors make an essential use of a problematic \cite[Theorem 5.5]{Lee}.
Second, let us comment the following argument of \cite{Lee}. 
Let $E$ be a lcs and let $E^{*}$ be endowed with the strong dual topology.
Take any continuous seminorm $p$ on $E$ and denote $E_0 =\{x \in E: p(x)=0\}$. Consider the quotient space $E_p=E/ E_0$ endowed with the following norm $\bar{p}(x + E_0)= p(x)$.
So, we have a normed space  $E_p$ (not necessarily complete).  Clearly, the natural map $Q: E\to E_p$ is a continuous linear surjection but not necessarily open. 
A crucial claim which appears in \cite{Lee} is the following: If $E^{*}$ is separable, then $(E_p)^{*}$ also is separable. However, this claim is wrong.
Therefore, the proof fails.  
\end{remark}

\newpage
\centerline{{\bf Statements and Declarations}}

The authors declare that no  support was received during the preparation of
this manuscript. The authors have no relevant financial or non-financial interests
to disclose.

\centerline{{\bf Conflict of interest}} 

There is no conflict of interest.

\centerline{{\bf Data availability}}

Data sharing not applicable to this article as no datasets were generated or
analyzed during the current study.
\vspace{0.3in}
\end{document}